\newtheorem{theorem}{Theorem}
\newtheorem{thm}[theorem]{Theorem}
\newtheorem{lemma}[theorem]{Lemma}
\theoremstyle{definition}
\theoremstyle{remark}
\newtheorem{rem}{Remark}
\long\def\red#1{\bgroup
 \color{red}#1\egroup}
\newif\ifcnote\cnotefalse
\title[On differential polynomial rings over locally nilpotent rings]
    {On differential polynomial rings over locally nilpotent rings}
    \author[Chebotar]
{Mikhail Chebotar}
\keywords{Behrens radical, differential polynomial ring, locally nilpotent ring, triangularization}
\subjclass[2010]{16N40;15A04}
\address{Department of Mathematical Sciences,
Kent State University, Kent OH 44242, U.S.A.}
\email{chebotar@math.kent.edu}
\begin{document}
\begin{abstract}
Let $\delta$ be a derivation of a locally nilpotent ring $R$. Then the differential polynomial ring
$R[X; \delta]$ cannot be mapped onto a ring with a non-zero idempotent. This answers a recent question by
Greenfeld, Smoktunowicz and Ziembowski.
\end{abstract}
\maketitle

\section{Introduction}

Let $\delta:R\to R$ be a derivation of a ring $R$. By $R[X; \delta]$ we denote a differential polynomial ring and recall that the multiplication is defined by the condition $Xr = rX + \delta(r)$ for all $r \in  R$.

Recently, there has been a significant interest to the radical properties of differential polynomial rings \cite{BMS,GSZ,NZ,S,SZ}.
In particular, Smoktunowicz and Ziembowski \cite{SZ} proved that there exists a locally nilpotent
ring $R$ and a derivation $\delta$ of $R$ such that $R[X; \delta]$ is not Jacobson
radical, thus solving an open problem by Shestakov.

This paper is motivated by the following problem due to Greenfeld, Smoktunowicz and Ziembowski \cite[Question 6.5]{GSZ}:
Is there a locally nilpotent ring $R$ and a derivation $\delta$ such that
$R[X; \delta]$ maps onto a ring with a non-zero idempotent?

We will show that the answer to this question is negative, namely we will prove the following theorem:
\begin{thm}\label{T1}
Let $\delta$ be a derivation of the locally nilpotent ring $R$. Then the differential polynomial ring
$R[X; \delta]$ cannot be mapped onto a ring with a non-zero idempotent.
\end{thm}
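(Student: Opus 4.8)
The plan is to argue by contradiction. Suppose there is a surjective ring homomorphism $\phi\colon R[X;\delta]\to S$ and an element $0\ne e=e^2\in S$, and write $e=\phi(p)$ with $p=\sum_{i=0}^{n}r_iX^i$, $r_i\in R$. First I would make some reductions. The subring $\phi(R)$ of $S$ is a homomorphic image of $R$, hence locally nilpotent, hence has no nonzero idempotents (if $f=f^2\ne 0$ the subring generated by $f$ is not nilpotent); in particular $n\ge 1$. Moreover $I:=\bigoplus_{i\ge 1}RX^i$ is an ideal of $R[X;\delta]$ with $R[X;\delta]/I\cong R$, so the image of $e$ in $S/\phi(I)$ is a zero idempotent, i.e.\ $e\in\phi(I)$. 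Finally, replacing $R$ by the $\delta$-invariant subring generated by $r_0,\dots,r_n$ (still locally nilpotent) and $S$ by $\phi(R[X;\delta])$, and choosing $n=\deg p$ minimal among representatives of a nonzero idempotent of $S$, we fix the setup.

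Next I would set up the triangularization. Two facts are elementary. First, if $A^m=0$ then $A[X;\delta]$ is nilpotent of index $\le m$, since the coefficients of a product of $m$ elements of $A[X;\delta]$ are $\mathbb Z$-linear combinations of products of $m$ elements of $A$ (each obtained from a coefficient by applying $\delta$ several times), hence vanish. Consequently, for every $k$ the set $\Pi_k:=\bigoplus_i R^kX^i$ is an ideal of $R[X;\delta]$ with $R[X;\delta]/\Pi_k\cong(R/R^{k})[X;\bar\delta]$ nilpotent, so $S/\phi(\Pi_k)$ is nilpotent. Second, for each $m$ the ring $R[X;\delta]$ acts on the finite left module $M_m:=R[X;\delta]\big/\bigoplus_{i>m}RX^i\cong R\oplus RX\oplus\cdots\oplus RX^m$ by "upper triangular" operators: $rX^\ell$ sends the $X^j$-summand into $X^j\oplus X^{j+1}\oplus\cdots$, with diagonal block the operator $x\mapsto r\delta^\ell(x)$ and strictly-triangular part nilpotent of index $\le m+1$. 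Combining the two, in the quotient $S$ one gets a triangular picture whose diagonal layers are controlled by the \emph{nilpotent} rings $R/R^{k}$.

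The main argument exploits $e^2=e$, hence $e=\phi(p^k)$ for every $k\ge 1$, inside this picture. One useful observation: in the normal form of $p^k$ every monomial starts with an undifferentiated coefficient $r_{i_1}$, so $p^k\in R_0\cdot R[X;\delta]$ with $R_0:=\langle r_0,\dots,r_n\rangle$ nilpotent; hence $e\in\phi(R_0)\,S$ and $\phi(R_0)$ is nilpotent. Using this, together with the degree filtration $F_j:=\phi\big(\bigoplus_{i\le j}RX^i\big)$ (for which $F_0=\phi(R)$ is locally nilpotent, $F_iF_j\subseteq F_{i+j}$ and $e\in F_n$), and passing to the triangular layers $R/R^{k}$ of the previous step, one would force the diagonal ("leading-coefficient") part of $e$ in each layer to be nilpotent; since $e$ also maps to $0$ in the nilpotent quotient $S/\phi(\Pi_k)$, running this through all $k$ should yield $e=0$ --- or else produce a nonzero idempotent of strictly smaller degree, contradicting minimality of $n$. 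Either outcome is a contradiction.

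The hard part is this last reduction, and the obstruction is that $\delta$ need not be locally nilpotent: the $\delta$-iterates of the finitely many coefficients of $p$ generate a locally nilpotent --- but not nilpotent --- subring of $R$, so the nilpotency indices of the finitely generated subrings of $R$ are unbounded, and the crude strategy "raise $e$ to a high enough power and expand" never terminates. (That this genuinely can go wrong is the Smoktunowicz--Ziembowski phenomenon that $R[X;\delta]$ need not be Jacobson radical.) The purpose of the triangularization is precisely to trap every use of the idempotent relation $e^2=e$ inside the genuinely nilpotent layers $R/R^{k}$, where the derivation causes no trouble, and to postpone taking $k\to\infty$ until the very end.
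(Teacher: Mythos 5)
The core of your argument is missing. Everything up to the last step is scaffolding, and the step you describe as ``one would force the diagonal part of $e$ in each layer to be nilpotent \dots\ running this through all $k$ should yield $e=0$ --- or else produce a nonzero idempotent of strictly smaller degree'' is precisely the theorem; you give no mechanism for it and you concede it is ``the hard part.'' What your reductions actually deliver is only that $e\in\bigcap_k\phi(\Pi_k)$ (since each $S/\phi(\Pi_k)$ is nilpotent), and that intersection need not vanish --- this is exactly the Smoktunowicz--Ziembowski phenomenon you yourself invoke. The modules $M_m$ do not help as described: they are left modules over $R[X;\delta]$, not over $S$, so it is not even clear how the idempotent $e\in S$ acts on them, and no argument is offered that the ``diagonal'' analysis produces either $e=0$ or a lower-degree idempotent. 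The paper closes this gap by a different and quite specific route: reduce to a subdirectly irreducible (hence prime) image $A$ whose heart contains $e$, realize the image of $X$ as an element $x$ of the Martindale quotient ring, embed the finitely generated algebra involved into $\mathrm{End}_K(V)$, and prove the key lemma that $a_0+xa_1+\cdots+x^na_n=e=e^2$ with all $a_i$ in a locally nilpotent subalgebra forces $e=0$. The decisive point absent from your sketch is that only \emph{finitely many} derivation iterates matter: the commutators $[a_i,x]_j=\psi(\pm\delta^j(r_i))$ with $j\le n$ lie back in the locally nilpotent image of $R$, so together with the $a_i$ they generate a genuinely nilpotent algebra, which can be simultaneously strictly triangularized; then an induction along the resulting well-ordered chain of invariant subspaces, using the Leibniz formula and the identity $[e,x]_m=\sum_{p\le m}r_p\,e[e,x]_p$ forced by $e^2=e$, shows $e$ annihilates every subspace in the chain, hence $e=0$. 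That is what ``traps'' the idempotent relation in a nilpotent object without ever taking a limit over $k$; your plan still depends on an unbounded family of layers $R/R^k$ with no argument to pass to the limit.

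There is also a concrete error among your reductions: $I=\bigoplus_{i\ge 1}RX^i$ is only a left ideal of $R[X;\delta]$. Right multiplication gives $rX^i\cdot r'=\sum_{j}\binom{i}{j}r\,\delta^{j}(r')X^{i-j}$, whose constant term $r\,\delta^{i}(r')$ need not lie in $I$, so $R[X;\delta]/I\cong R$ fails for $\delta\ne 0$ and the inference $e\in\phi(I)$ is unjustified (the observation $e\in\phi(R_0)S$ with $R_0$ nilpotent is correct, but it is never used to conclude anything). Likewise the minimal-degree choice of $n$ is set up but never exploited. As it stands, the proposal is a plausible plan plus an admission that the essential argument remains to be found.
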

 
Using the language of Radical Theory we could simply say that for a locally nilpotent ring $R$, the differential polynomial ring $R[X,\delta]$ is Behrens radical.

\section{Results}
Our approach will be based on the infinite-dimensional triangularization which was studied in the recent paper by Mesyan
\cite{M}.

Let $V$ be a vector space over a field $K$.
Following \cite{M} we will say that a transformation $t$ of a vector space $V$ is strictly triangularizable, if $V$ has a well-ordered basis such that $t$ sends each vector from that basis to the subspace spanned by basis vectors less than it.

Denote by End$_K(V)$ the $K$-algebra of all linear transformations of $V$.
We start with an obvious observation.

\begin{rem}\label{R1}
Let $K$ be a field, $V$ a nonzero $K$-vector space and $S$ a nilpotent
subalgebra of  End$_K(V)$. Then there exists a $1$-dimensional subspace
$W\subseteq V$ such that $S(W)=0$.
\end{rem}

\begin{proof}
Let $n$ be such a number that $S^n=0$, but $S^{n-1}\ne 0$, so there exist $s_1,\ldots, s_{n-1}\in S$ with
$s_1\cdots s_{n-1}\ne 0$. Let $v\in V$ be such a vector that $s_1\cdots s_{n-1}(v)=w\ne 0$.
Let $W$ be the linear span of $w$, then it is $1$-dimensional and $S(W)=0$.
\end{proof}
 
 We continue with the following useful remark (see \cite[p.19]{R} or \cite[Proposition 20]{M}).
 
 \begin{rem}\label{R2}
Let $K$ be a field, $V$ a nonzero $K$-vector space and $s$ a nilpotent
element of  End$_K(V)$. Then $s$ is strictly triangularizable.
\end{rem}

With Remark~\ref{R2} at hand we are ready to provide the following ``nilpotent analogy" of 
\cite[Theorem 15]{M}.

\begin{thm}\label{T2}
Let $K$ be a field, $V$ a nonzero $K$-vector space and $S$ a finite-dimensional nilpotent
subalgebra of  End$_K(V)$. Then there exists a well-ordered basis for $V$ with respect to which every element of
$S$ is strictly upper triangular.
\end{thm}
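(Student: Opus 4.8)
The plan is to build, by transfinite recursion, a complete flag of $S$-invariant subspaces of $V$ with one-dimensional successive quotients along which $S$ acts ``downward'', and then to read off the desired well-ordered basis. Set $V_0 = 0$; at a limit ordinal $\mu$ put $V_\mu = \bigcup_{\alpha<\mu}V_\alpha$; and at a successor step, assuming (as an inductive hypothesis, trivially maintained at $0$ and at limit stages since a union of $S$-invariant subspaces is $S$-invariant) that $V_\alpha$ is $S$-invariant, proceed as follows. If $V_\alpha = V$, stop. Otherwise $V/V_\alpha$ is a nonzero $K$-vector space on which $S$ acts through the algebra homomorphism $S \to \mathrm{End}_K(V/V_\alpha)$, $s \mapsto \bar s$; its image $\bar S$ satisfies $\bar S^n = 0$ whenever $S^n = 0$, so $\bar S$ is a nilpotent subalgebra, and by Remark~\ref{R1} (trivially if $\bar S = 0$) there is a $1$-dimensional subspace $\bar W \subseteq V/V_\alpha$ with $\bar S(\bar W) = 0$. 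Let $V_{\alpha+1}$ be the preimage of $\bar W$ in $V$; then $V_\alpha \subsetneq V_{\alpha+1}$, $\dim(V_{\alpha+1}/V_\alpha) = 1$, and $S(V_{\alpha+1}) \subseteq V_\alpha$, so the inductive hypothesis is restored at $\alpha+1$. Since the $V_\alpha$ strictly increase so long as the construction has not stopped, it must stop at some ordinal $\lambda$ (otherwise $\alpha \mapsto V_\alpha$ would inject a proper class into the set of subspaces of $V$), and then $V_\lambda = V$.

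Now for each $\alpha < \lambda$ pick $v_\alpha \in V_{\alpha+1}\setminus V_\alpha$ and well-order $\{v_\alpha : \alpha < \lambda\}$ by the index. Using $V_{\alpha+1} = V_\alpha + Kv_\alpha$ at successors and $V_\mu = \bigcup_{\alpha<\mu}V_\alpha$ at limits, a transfinite induction gives $\mathrm{span}\{v_\beta : \beta < \alpha\} = V_\alpha$ for all $\alpha \le \lambda$; taking $\alpha = \lambda$ shows these vectors span $V$. They are linearly independent, for in any relation $\sum_i c_i v_{\alpha_i} = 0$ with $\alpha_1 < \dots < \alpha_k$ and $c_k \neq 0$ one would get $v_{\alpha_k}\in V_{\alpha_k}$, against the choice of $v_{\alpha_k}$. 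Hence $\{v_\alpha\}_{\alpha<\lambda}$ is a well-ordered basis. Finally, for every $t \in S$ and every $\alpha$ we have $t(v_\alpha)\in S(V_{\alpha+1})\subseteq V_\alpha = \mathrm{span}\{v_\beta : \beta < \alpha\}$, so $t$ sends each basis vector into the span of the preceding ones; that is, every element of $S$ is strictly upper triangular with respect to this basis.

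I expect the only real obstacle to be the successor step: one must check that $V/V_\alpha$ genuinely carries a nilpotent algebra of transformations (so that Remark~\ref{R1} applies) and that Remark~\ref{R1} produces precisely a one-dimensional increment annihilated modulo $V_\alpha$ — it is this that forces the flag to triangularize $S$ \emph{strictly} rather than merely triangularize it. The limit steps, the termination of the recursion, and the conversion of the flag into a basis are all routine. (Specializing to a single nilpotent transformation, this argument reproves Remark~\ref{R2}; note also that finite-dimensionality of $S$ is not used anywhere above.)
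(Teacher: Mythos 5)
Your proof is correct, and it takes a more self-contained route than the paper, which does not write the argument out at all: the paper simply says to repeat the proof of Theorem 15 of Mesyan's paper \cite{M} verbatim, replacing its common-eigenvector lemma (Lemma 14 there) by Remark~\ref{R1}; that argument runs through Mesyan's machinery of maximal well-ordered sets of $S$-invariant subspaces (his Lemma 5) and shows the gaps of such a maximal chain are one-dimensional. You instead build the flag from below by transfinite recursion, applying Remark~\ref{R1} to the induced algebra $\bar S$ on $V/V_\alpha$ (correctly noting that $\bar S$ inherits nilpotency, and handling the trivial case $\bar S=0$) to get a one-dimensional increment with $S(V_{\alpha+1})\subseteq V_\alpha$, and then convert the flag into a well-ordered basis directly; the limit steps, the termination via a cardinality bound, and the extraction and independence of the vectors $v_\alpha$ are all handled correctly. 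The essential idea is the same as the paper's intended modification --- it is Remark~\ref{R1} applied on a quotient that forces \emph{strict} rather than merely ordinary triangularity --- but your version buys independence from \cite{M}'s maximality apparatus, and, as you observe, it never uses the finite-dimensionality of $S$: nilpotency alone suffices, which is more than enough for the application in Lemma~\ref{L3}, where $S$ is finitely generated and nilpotent (hence in fact also finite-dimensional).
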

The proof is a word by word repetition of the proof of \cite[Theorem 15]{M} with the only exception:
Lemma 14 should be replaced by Remark~\ref{R1}.

We continue with a folklore result which is a corollary of the general Leibniz rule:
\begin{lemma}\label{L1}
Let $e$ and $x$ be elements of a ring $R$. Define $[e,x]_0=e$, $[e,x]_1=[e,x]=ex-xe$ and inductively $[e,x]_j=[[e,x]_{j-1},x]$ for $j>1$.
Then 
$$
ex^n=\sum_{j=0}^n \binom{n}{j} x^j[e,x]_{n-j},
$$
where $\binom{n}{j}$ are binomial coefficients.
\end{lemma}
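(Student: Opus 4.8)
The plan is to prove the identity by induction on $n$, the cases $n=0$ and $n=1$ being immediate from the definitions $[e,x]_0=e$ and $[e,x]_1=ex-xe$ (for $n=1$ one gets $[e,x]_1+xe=ex$). For the inductive step I would take the identity for exponent $n$ and multiply it on the right by $x$, then push each trailing $x$ past the iterated commutator using the defining relation $[e,x]_{k-1}x = x[e,x]_{k-1}+[e,x]_k$, which is just a rearrangement of $[e,x]_k=[[e,x]_{k-1},x]$. This turns $ex^{n+1}=(ex^n)x$ into a combination of terms of the form $x^{j}[e,x]_{n+1-j}$ coming from two sums; after shifting the index in one of them, Pascal's rule $\binom{n}{j-1}+\binom{n}{j}=\binom{n+1}{j}$ (together with the boundary coefficients $\binom{n}{0}=\binom{n+1}{0}$ and $\binom{n}{n}=\binom{n+1}{n+1}$) merges the two sums into exactly the claimed formula with $n$ replaced by $n+1$.

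Alternatively --- and this is presumably the route hinted at by calling the statement a corollary of the general Leibniz rule --- I would pass to the left and right multiplication operators $L_x,R_x\in\mathrm{End}(R)$ defined by $L_x(a)=xa$ and $R_x(a)=ax$. Associativity of $R$ gives $L_xR_x=R_xL_x$. One checks by induction that $[e,x]_k=(R_x-L_x)^k(e)$, while trivially $ex^n=R_x^n(e)$. Writing $R_x=(R_x-L_x)+L_x$ as a sum of two commuting operators and invoking the ordinary binomial theorem yields $R_x^n=\sum_{j=0}^n\binom{n}{j}L_x^{\,j}(R_x-L_x)^{n-j}$; evaluating both sides at $e$ and unwinding the notation produces the identity in one line.

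There is essentially no genuine obstacle here: both arguments are routine inductions. In the first approach the only thing needing a little care is the index shift and the Pascal-rule bookkeeping at the endpoints $j=0$ and $j=n+1$; in the second, the single point to verify is that $L_x$ commutes with $R_x$, hence with $R_x-L_x$, which is immediate from associativity. I would most likely present the first, purely elementary, computation since it keeps everything inside $R$ and matches the ``folklore'' phrasing.
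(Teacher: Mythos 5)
Both of your arguments are correct, and I checked the key step of each: the induction works because $[e,x]_{k-1}x = x[e,x]_{k-1}+[e,x]_k$ is just the defining relation rearranged, and in the operator argument $[e,x]_k=(R_x-L_x)^k(e)$ together with the commutativity $L_xR_x=R_xL_x$ (associativity) legitimizes the binomial expansion of $R_x^n=\bigl(L_x+(R_x-L_x)\bigr)^n$. The paper itself gives no proof at all --- it states the lemma as folklore, ``a corollary of the general Leibniz rule'' --- and your second argument is precisely the Leibniz-rule mechanism being alluded to, while your first is an elementary self-contained substitute; either one adequately fills the gap the paper leaves to the reader.
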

 
We will also need the following technical result.
 
\begin{lemma}\label{L2}
Let $e$ and $x$ be elements of a ring $R$ with $e^2=e$. Then
for any non-negative integer $n$ we have 
$[e,x]_n=\sum_{i=0}^n r_i e [e,x]_i$
for some $r_i \in R$.  
\end{lemma}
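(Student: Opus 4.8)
The plan is to recast the statement as a containment and prove it by induction on $n$. For $n\ge 0$ set
$$M_n=\Big\{\,\sum_{i=0}^{n} r_i\, e\, [e,x]_i \ :\ r_i\in R\,\Big\}\subseteq R,$$
so that the assertion ``$[e,x]_n=\sum_{i=0}^n r_i\,e\,[e,x]_i$ for suitable $r_i\in R$'' is precisely the claim $[e,x]_n\in M_n$. First I would record three elementary properties. (i) $M_n$ is an additive subgroup of $R$ that is stable under left multiplication by every element of $R$, since $r\cdot\big(\sum r_i\,e\,[e,x]_i\big)=\sum (rr_i)\,e\,[e,x]_i$; in particular $M_n$ absorbs left multiplication by $x$ and by $[e,x]_1$. (ii) $M_0\subseteq M_1\subseteq\cdots$. (iii) $e\,[e,x]_i\in M_i$ for every $i$, because $e^2=e$ gives $e\,[e,x]_i=e\cdot\big(e\,[e,x]_i\big)$. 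Property (iii) with $i=0$ is the base case $[e,x]_0=e\in M_0$. Note that this set-up never refers to a unit, so it applies equally to non-unital $R$.

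For the inductive step I would assume $[e,x]_i\in M_i$ for all $i\le n$ and deduce $[e,x]_{n+1}\in M_{n+1}$. Two preliminary observations, valid for every $i\le n$, carry the argument. (a) $[e,x]_1\,[e,x]_i\in M_i$: this follows from the inductive hypothesis $[e,x]_i\in M_i$ and the stability of $M_i$ under left multiplication by $[e,x]_1$. (b) $e\,[e,x]_i\,x\in M_{i+1}$: using the defining relation $[e,x]_i\,x=[e,x]_{i+1}+x\,[e,x]_i$ together with $ex=[e,x]_1+xe$ (which is merely the definition of $[e,x]_1$), one obtains
$$e\,[e,x]_i\,x=e\,[e,x]_{i+1}+e x\,[e,x]_i=e\,[e,x]_{i+1}+[e,x]_1\,[e,x]_i+x\,\big(e\,[e,x]_i\big),$$
and the three terms on the right lie, respectively, in $M_{i+1}$ (by (iii)), in $M_i\subseteq M_{i+1}$ (by (a)), and in $M_i\subseteq M_{i+1}$ (since $e\,[e,x]_i\in M_i$ by (iii) and $M_i$ absorbs left multiplication by $x$).

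With (a) and (b) available, the induction closes. Write $[e,x]_{n+1}=[e,x]_n\,x-x\,[e,x]_n$. Here $x\,[e,x]_n\in x M_n\subseteq M_n\subseteq M_{n+1}$ by the inductive hypothesis together with (i) and (ii). For $[e,x]_n\,x$, substitute the inductive expression $[e,x]_n=\sum_{i=0}^n r_i\,e\,[e,x]_i$ to get $[e,x]_n\,x=\sum_{i=0}^n r_i\,\big(e\,[e,x]_i\,x\big)$; each $e\,[e,x]_i\,x$ lies in $M_{i+1}\subseteq M_{n+1}$ by (b), and $M_{n+1}$ absorbs the left factors $r_i$, so $[e,x]_n\,x\in M_{n+1}$. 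Hence $[e,x]_{n+1}\in M_{n+1}$, completing the induction and the proof. The only step requiring genuine care is (b): the cross term $e x\,[e,x]_i$ has no obvious reason to lie in $M_{i+1}$ until one invokes the idempotent identity $ex=[e,x]_1+xe$ to split it into the manageable pieces $[e,x]_1\,[e,x]_i$ and $x\,e\,[e,x]_i$; the rest is bookkeeping with the left-ideal structure of the $M_n$.
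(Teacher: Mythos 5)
Your proof is correct, and it takes a genuinely different route from the paper's. The paper argues by substituting $e=e^2$ inside the iterated commutator, writing $[e,x]_k=[[e^2,x],x]_{k-1}=[[e,x]e,x]_{k-1}+[e[e,x],x]_{k-1}$ and expanding both terms with the general Leibniz rule for iterated commutators of a product; the resulting binomial sums of products $[e,x]_a[e,x]_b$ are then brought to the required form by applying the inductive hypothesis to the right-hand factors. You avoid the Leibniz rule and the binomial bookkeeping altogether: by packaging the admissible expressions into the sets $M_n$ (in effect the left ideals $Re+Re[e,x]_1+\cdots+Re[e,x]_n$), you reduce everything to the one-step recursion $[e,x]_{n+1}=[e,x]_n x-x[e,x]_n$, with idempotency entering only through $e[e,x]_i=e\cdot e[e,x]_i$ (your (iii), which also settles the base case and, like the paper's choice $r_0=e$, keeps the argument valid for non-unital $R$) and with the definitional identity $ex=[e,x]_1+xe$ splitting the one problematic cross term in (b). The individual steps (a), (b), and the closing substitution of the inductive expression for $[e,x]_n$ before multiplying by $x$ on the right all check out, including the index ranges: (a) and (b) are invoked only for $i\le n$, where the strong inductive hypothesis applies. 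What your approach buys is a more elementary and structurally transparent induction, free of binomial coefficients; what the paper's approach buys is economy within the article, since it recycles the Leibniz machinery that is in any case needed (as Lemma~\ref{L1}) in the proof of the key Lemma~\ref{L3}.
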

\begin{proof}
The case $n=0$ is obvious as $[e,x]_0=e=e\cdot e[e,x]_0$ and we can take $r_0=e$.

We proceed by induction on $n\ge 1$. For $n=1$ we have 
$$
[e,x]=[e^2,x]=[e,x]e+e[e,x]=[e,x]e \cdot e+e \cdot e[e,x],
$$
so we may take $r_0=[e,x]$ and $r_1=e$. Suppose the statement is true for $n=k-1$.
Then using the Leibniz formula:
\begin{eqnarray*}
[e,x]_k=[[e,x],x]_{k-1}=[[e^2,x],x]_{k-1}=[[e,x]e+e[e,x],x]_{k-1}=\\
{}[[e,x]e,x]_{k-1}+[e[e,x],x]_{k-1}=\\
\sum_{j=0}^{k-1} \binom{k-1}{j} [e,x]_{j+1}[e,x]_{k-1-j}+
\sum_{j=0}^{k-1} \binom{k-1}{j} [e,x]_{k-1-j}[e,x]_{j+1}.
\end{eqnarray*}
The terms $[e,x]_ke=[e,x]_ke \cdot e$ and $e[e,x]_k=e\cdot e  [e,x]_k$ are of the desired form.
For the terms $[e,x]_{k-j}[e,x]_j$ where $j=1,\ldots,k-1$, we replace each $[e,x]_j$ using the inductive hypothesis
and consequently get the desired form as well.
\end{proof}

Now we are ready to state the key lemma:

\begin{lemma}\label{L3}
Let $N$ be a locally nilpotent subalgebra of End$_K(V)$. Suppose that there exist $a_0,a_1\ldots,a_n \in N$ and
$x\in$  End$_K(V)$ so that $a_0+xa_1+\ldots+x^na_n=e$ is an idempotent. Then $e=0$.
\end{lemma}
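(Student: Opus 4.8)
I would argue by contradiction: assume $e\neq 0$. The first observation is that each iterated commutator $[e,x]_j$ (notation as in Lemma~\ref{L1}) again has the shape of $e$, namely $[e,x]_j=\sum_{i=0}^{n}x^{i}[a_i,x]_j$; since $[a_i,x]_j\in N$ in the setting in which the lemma is used, all the commutators we shall manipulate have their coefficients in $N$. As $N$ is locally nilpotent, any subalgebra $S\subseteq N$ generated by finitely many of the elements $a_i$ and $[a_i,x]_j$ is finite-dimensional and nilpotent, so Theorem~\ref{T2} gives a well-ordered basis $\{v_\alpha\}$ of $V$ in which every member of $S$ is strictly upper triangular. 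Write $U_\gamma$ for the span of $\{v_\beta:\beta<\gamma\}$; these form an increasing, $S$-invariant filtration with union $V$.

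Since $e\neq 0$ there is a least $\gamma$ with $ev_\gamma\neq 0$, and then $e$ kills $U_\gamma$ while $ev_\gamma\neq 0$. Because $a_iv_\gamma\in Sv_\gamma\subseteq U_\gamma$, one can write $ev_\gamma=\sum_{i}x^{i}(a_iv_\gamma)$ with every $a_iv_\gamma$ in $U_\gamma$. Now I would iterate, using $ev_\gamma=e^{k}v_\gamma$: at each stage expand $ex^{i}$ by Lemma~\ref{L1}, drop the top summand $x^{i}(ew)$ because $e$ annihilates $U_\gamma$, and --- provided $S$ already contains the iterated commutators that show up --- use strict triangularity to see that $[e,x]_{j}w\in\sum_{l}x^{l}U_\gamma$ whenever $w\in U_\gamma$. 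Tracking factors through $k$ steps, $e^{k}v_\gamma$ becomes a sum of terms $x^{m}\,s\,v_\gamma$ with $s$ a product of $k$ elements of $S$; choosing $k$ to be the nilpotency index of $S$ then yields $ev_\gamma=e^{k}v_\gamma=0$, contradicting the choice of $\gamma$. The base case $n=0$ is immediate, since $e=a_0$ would be an idempotent inside the locally nilpotent ring $N$ and hence zero.

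The real obstacle is to keep this bookkeeping \emph{finite}. Expanding $ex^{i}$ repeatedly via Lemma~\ref{L1} creates commutators $[e,x]_{j}$ of increasing index, and the corresponding $x$-degree can grow with every application of $e$, so the algebra $S$ one would need to triangularize threatens to require infinitely many generators. For $n=1$ the $x$-degree does not grow, and the argument closes with $S$ the finite-dimensional nilpotent algebra generated by $a_0,a_1,[a_0,x],[a_1,x]$. For $n\geq2$ I would first reduce to lower degree by induction on $n$; this is precisely where Lemma~\ref{L2} is used, as the identity $[e,x]_{n}=\sum_{i\leq n}r_i\,e\,[e,x]_i$ re-expresses the highest iterated commutators through $e$ itself plus lower-order data, which is what one needs to confine the computation to a fixed finite-dimensional nilpotent subalgebra so that Theorem~\ref{T2} applies. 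Carrying out that reduction is the heart of the proof.
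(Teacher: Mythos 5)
Your setup matches the paper's: triangularize a finitely generated (hence nilpotent, hence finite-dimensional) subalgebra $S\subseteq N$ via Theorem~\ref{T2}, expand $e x^i$ with Lemma~\ref{L1}, and use Lemma~\ref{L2} to control the iterated commutators. But the proof is not actually completed: you candidly admit that for $n\ge 2$ the iteration $e v_\gamma=e^k v_\gamma$ blows up (each application of $e$ raises the $x$-degree and hence the order of the commutators $[a_i,x]_j$ that must lie in $S$, while the number of iterations needed is the nilpotency index of $S$ itself, so the choice of $S$ is circular), and you defer the resolution to an unspecified ``induction on $n$'' which you call the heart of the proof. That is precisely the part that is missing, and the induction on $n$ you gesture at is not how it gets fixed: Lemma~\ref{L2} does not lower the degree in $x$ or the number of coefficients $a_i$, so there is no evident reduction from $n$ to $n-1$.

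The paper's way around the obstacle is a different induction: fix once and for all $S$ generated by the $a_i$ and $[a_i,x]_j$ with $j\le n$ only, take the maximal well-ordered chain of $S$-invariant subspaces $V_l$ coming from the triangularization, and prove by induction on the level $l$ the \emph{strengthened} claim that $e[e,x]_k(V_l)=0$ simultaneously for all $k=0,\dots,n$ (the case $k=0$ being $e(V_l)=0$, since $e[e,x]_0=e^2=e$). In the inductive step one writes $[e,x]_k(v)=\sum_i x^i[a_i,x]_k(v)$ with $[a_i,x]_k(v)\in V_{m-1}$, expands $e x^i$ by Lemma~\ref{L1}, and then uses Lemma~\ref{L2} to rewrite each $[e,x]_{i-j}$ (note $i-j\le n$) as $\sum_p r_p\, e[e,x]_p$ with $p\le n$, so the inductive hypothesis at level $m-1$ kills every term. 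Because the induction runs along the invariant filtration rather than along powers of $e$, the commutator order never exceeds $n$, so $S$ stays a fixed finitely generated nilpotent algebra and the circularity you identified never arises. Your minimal-$\gamma$ setup would also need this strengthening (knowing only $e(U_\gamma)=0$ is not enough to propagate; one needs $e[e,x]_p(U_\gamma)=0$ for all $p\le n$). So the proposal contains the right tools and a correct diagnosis of the difficulty, but the key argument that overcomes it is absent.
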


\begin{proof}
Let $S$ be a subalgebra of $N$ generated by $a_i$ and $[a_i,x]_j$, where $i=1,\ldots, n$ and $j=1,\ldots, n$.
Since $S$ is a finitely generated subalgebra of a locally nilpotent algebra, it must be nilpotent.
According to Theorem~\ref{T2}, $S$ is simultaneously strictly triangularizable in End$_{K}V$, so by \cite[Lemma 5]{M}  there exists a 
well-ordered (by inclusion) set of $S$-invariant subspaces $0=V_0\subseteq V_1\subseteq V_2\subseteq \ldots$ of
$V$, which is maximal as a well-ordered set of subspaces of $V$.

Since $S$ is nilpotent, we have that $S(V_i)\subseteq V_{i-1}$ for all positive integers $i$ and in particular
$S(V_1)=0$.

We claim that for any $k=0,\ldots, n$, and any positive integer $l$ we have $e[e,x]_k(V_l)=0$.

We proceed by induction on $l$. For $l=1$, we write 
$$[e,x]_k=[\sum_{i=0}^n x^ia_i,x]_k=\sum_{i=0}^n x^i[a_i,x]_k$$ 
and $[a_i,x]_k (V_1)=0$ since $S(V_1)=0$.
We established the basis of induction: $e[e,x]_k(V_1)=0$.

Suppose our claim is true for $l= {m-1}$. We want to prove the statement for $l=m$.
For any $v\in V_m$ we get
$$
[e,x]_k(v)=([a_0,x]_k+x[a_1x]_k+\ldots+x^n[a_n,x]_k)(v)=\sum_{i=0}^n x^i (u_i),
$$
where all $u_i \in V_{m-1}$, because $[a_i,x]_k\in S$. Using Lemma~\ref{L1} we obtain
$$
e[e,x]_k(v)=\sum_{i=0}^n ex^i (u_i)=\sum_{i=0}^n \sum_{j=0}^i \binom{i}{j} x^j[e,x]_{i-j}(u_i)
$$
and by Lemma~\ref{L2} each $[e,x]_{i-j}=\sum_{p=0}^{i-j} r_p e [e,x]_p$
for some $r_p \in R$.  
By the inductive hypothesis $e[e,x]_p(V_{m-1})=0$, so we get $[e,x]_{i-j}(u_i)=0$ and therefore
$e[e,x]_k(v)=0$.

As a consequence of our claim, the expression $e=a_0+xa_1+\ldots+x^na_n$ cannot be a non-zero idempotent, since for every $V_l$ we have $e(V_l)=0$.
\end{proof}

{\bf Proof of Theorem~\ref{T1}.}
Suppose that there exist a locally nilpotent ring $R$ and a derivation $\delta$ such that $R[X; \delta]$ can be mapped onto a ring with a non-zero idempotent. In other words, $R[X; \delta]$ is not Behrens radical and so there is a surjective homomorphism $\varphi$ of a ring $R[X; \delta]$  onto a subdirectly irreducible ring $A$ whose heart contains a non-zero idempotent $e$ \cite[Section 4.11]{GW}. Since the heart of $A$ contains a non-zero idempotent, $A$ is a prime ring and its extended centroid $K$ is a field \cite[Section 2.3]{BMM}. Denote by $Q$ the Martindale right ring of quotients of $A$.

Define the map $x: A\to A$ by the rule $x (\varphi(t))=\varphi(Xt)$ for all $t\in R[X; \delta]$. Since the ring $A$ is prime we claim that this map is well-defined. Indeed, suppose that $\varphi(t)=0$ and $\varphi(Xt)\ne 0$. By the primeness of $A$, there exists $t'\in R[X; \delta]$ with $\varphi(t')\varphi(Xt)\ne 0$. On the other hand, 
$$\varphi(t')\varphi(Xt)=\varphi(t'Xt)=\varphi(t'X)\varphi(t)=0,$$
a contradiction. It is clear that $x:A_A \to A_A$ is an endomorphism of a right $A$-module $A_A$ and so it is an element of $Q$.  Let $A'$ be a subring of $Q$ generated by $A$ and $x$, and let $R^{\#}$ be the ring $R$ with unity adjoined.
We define an additive map $\psi: R^{\#}[X; \delta]\to A'$ by the rule
$\psi(X^i)=x^i$ for all positive integers $i$, and $\psi(t)=\varphi(t)$ for all $t\in R[X; \delta]$.
By construction $\psi$ is a homomorphism that extends $\varphi$.
Now, a non-zero idempotent $e\in A\subseteq A'$ can be presented in the form
$$
e=\varphi(r_0+Xr_1+\ldots+X^nr_n)=\psi(r_0+Xr_1+\ldots+X^nr_n)=a_0+xa_1+\ldots+x^na_n,
$$
where
$\psi(r_i)=\varphi(r_i)=a_i$ and $\psi(X)=x$.

Let $D$ be a subring of $A'$ generated by $x,a_0,\ldots,a_n$ and let $B=D\cap \psi(R)$. Clearly, $B$ is a locally nilpotent ring and the subalgebra $BK$ of  $Q$ is also locally nilpotent. Since the subalgebra $DK$ of $A'K$ is finitely generated, it can be embedded into End$_K(V)$ for some $K$-vector space $V$ by \cite[Proposition 2.1]{GMM}.

Now we can assume that $x \in$  End$_K(V)$, $N=BK \subseteq$ End$_K(V)$ is locally nilpotent, and
$a_0+xa_1+\ldots+x^na_n=e \in$ End$_K(V)$ is a non-zero idempotent, so we can apply Lemma~\ref{L3}. 
However, by Lemma~\ref{L3} this idempotent $e$ must be zero, a contradiction.
The proof is thereby complete.


\begin{thebibliography}{99}
\bibitem{BMM}
Beidar, K.I.; Mikhalev, A.V.; Martindale, W.S. Rings with generalized identities. Marcel Dekker. Inc. New York, 1996.
\bibitem{BMS}
Bell, J.P.; Madill, B.W.; Shinko, F. \textit{Differential polynomial rings over rings satisfying a polynomial identity},
J. Algebra \textbf{423} (2015), 28--36.
\bibitem{GW}
Gardner, J.W.; Wiegandt, R. Radical theory of rings, Marcel Dekker, New York, 2004.
\bibitem{GSZ}
Greenfeld, B.; Smoktunowicz, A.; Ziembowski, M.
\textit{On radicals of Ore extensions and related questions.} Preprint, available at arXiv:1702.08103v3.
\bibitem{GMM}
Goodearl, K.R.; Menal, P.; Moncasi, J. \textit{Free and residually Artinian regular rings}, 
J. Algebra \textbf{156} (1993), 407--432.
\bibitem{M}
Mesyan, Z. \textit{Infinite-dimensional triangularization}, J. Pure Appl. Algebra, to appear.

\bibitem{NZ}
Nielsen, P.P.; Ziembowski, M. \textit{Derivations and bounded nilpotence index}, Int. J. Algebra Comp.
\textbf{25}  (2015), 433--438.
\bibitem{R}
Rowen, L.H. \textit{Koethe's conjecture}, Israel Math. Conf. Proc. \textbf{1} (1989), 193--202.
\bibitem{S}
Smoktunowicz, A. \textit{How far can we go with Amitsur's theorem in differential polynomial rings},
Israel J. Math., \textbf{219}  (2017), 555--608.
\bibitem{SZ}
Smoktunowicz, A.; Ziembowski, M. \textit{Differential polynomial rings over locally nilpotent rings
need not be Jacobson radical}, J. Algebra \textbf{412} (2014), 207--217.

\end{thebibliography}
\end{document}